\documentclass[12pt, reqno]{amsart}
%%%%%%%%%%%%%%%%%%%%%%%%%%%%%%%%%%%%%%%%%%%%%%%%%%%%%%%%%%%
%\listfiles
%\input vatola.sty
\usepackage{amsfonts}
\usepackage{latexsym}
\usepackage{amssymb}
\usepackage{amsthm}
\usepackage{amsmath}
\usepackage{color}
\usepackage[all]{xy}

\numberwithin{equation}{section} \numberwithin{figure}{section}
\hoffset-10mm \voffset-10mm \headsep=30pt \textheight=22.5cm
\textwidth=16.5cm\oddsidemargin 10pt \evensidemargin 10pt

\def\dmq{D^2\backslash Q}
\def\myev{\mathrm{ev}}
\def\fp{f_{\pi}}

%%%%%%%%%%%%%%%%%%%%%%%%%%%%%%%%%%%%%%%%%%%%%%%%%%%

 \newtheorem{thm}{Theorem}[section]
 
 \newtheorem{lem}[thm]{Lemma}
 \newtheorem{prop}[thm]{Proposition}
 \newtheorem{defn}[thm]{Definition}
 \theoremstyle{remark}
 
 \newtheorem{ex}[thm]{Example}

%%%%%%%%%%%%%%%%%%%%%%%%%%%%%%%%%%%%%%%%%%%%%%%%%%%
\title{A forcing relation of braids from Nielsen fixed point theory}

\author{Jiaoyun Wang}
\address{School of Science, Tianjin Chengjian University, Tianjin 300384, China}
\thanks{This work is partially supported by NSFC (11326077)}
\email{wanyueliang418@163.com}

\author{Xuezhi Zhao}
\address{School of Mathematical Sciences, Capital Normal University, Beijing 100048, China}
\email{zhaoxve@mail.cnu.edu.cn}

\keywords{braid group, fixed point, disk, fundamental group}

\begin{document}

\makeatletter

\let\uppercasenonmath\@gobble% disables title uppercase
\let\MakeUppercase\relax% disables author uppercase
\let\scshape\relax% disables section smallcaps
\makeatother
%%%%%

\maketitle

\begin{abstract}  In this paper, we focus our attention on the connections between the
braid group and the Nielsen  fixed point theory. A new forcing relation between braids is introduced, and shown that it can be fulfilled by using Nielsen fixed point theory.
\end{abstract}

\section{Introduction}

The notion of braid type was proposed by Boyland \cite{Bo1,Boyland1994} and T. Matsuoka \cite{Ma} in the 1980's to study the dynamics of surface homeomorphisms. More specifically, forcing relations on the set of braid types were introduced to help us to understand the isotopy-stable dynamics, and it may be regarded as a two-dimensional generalization of Sharkovskii's relation for interval maps.
The interplay of braid types with the Artin braid groups has been used in a number of papers to try and understand the periodic orbit structure and Nielsen fixed point theory of surface homeomorphisms, see \cite{Gu} and more references there.

Since any forced braid is determined by a finite invariant set, which consists of periodic points of a surface homeomorphism $f$, it is natural to use fixed point theory to deal with the forcing relation. In this direction Jiang and Zheng \cite{JZ} deduced a trace formula for the computation of the $n+m$-strand forced extensions of a nontrivial braid  by using the Nielsen fixed point theory and a representation of braid groups, and shew that their forcing relation is computable algorithmically.

In this paper, we introduce a more delicate forcing relation by a careful study on the braids coming from fixed points. Moreover, the braids from fixed points of all iterations of a given homeomorphism on the disk are arranged into the same group $B^n_{n+1}$, so as a by-product we obtain a uniform coordinate set for all fixed points of all homeomorphisms with $n$-point invariant set.  In \cite{Gu}, Guaschi tried to use invariants of braids, hence of knots and links, to distinguish fixed point classes. What we are doing here in some sense is his inverse,we will use Nielsen fixed point theory to understand the forcing relation between braids. The connection between fixed points and braids are essentially used. It should be mentioned that the distinguishing of fixed point class for homeomorphisms on punctured disk is known to be solvable by a significant of Bridson and  Groves \cite{Bridson}, and Bogopolski and Maslakova \cite{BM2016}. Thus, our forcing relation between braids is also solvable, so is that in the sense of braid type of Boyland.

This paper is arranged as follows. In section 2, we fix some notations used in whole paper, and review some basic facts about braid groups. The relation between braid and invariants of an isotopy is specified. A new coordinate for fixed point class is introduced in section 3, which is derived for the relation between braids and fixed point of maps on the disk. Hence, a new forcing relation can be formalized. Section 4 will characterize the forced braid by stratified fixed point theory.  An Example is given in the final section.

\section{Preliminaries of braid groups}

We shall introduce some notations related to braid groups, some basic related facts will also be reviewed. The basic references for this topic are \cite{A, Bir, KT, Mu}.
We make following convention in notations.
\begin{itemize}
  \item $q_0$: the chosen base point $(0,1)$ of unit disk $D^2$,
  \item $q_k$: the chosen point $(-\frac{1}{k+1},0)$ in $D^2$ for $k=1,2, \ldots$,
  \item $\omega_k$: the line segment from $q_0$ to $q_k$  for $k=1,2, \ldots$,
  \item $\nu_k$: a loop in $D^2$ around $q_k$ with base point $q_0$  for $k=1,2, \ldots$,
  \item $Q$: the $n$-point set $\{q_1, \ldots, q_n\}$ of $D^2$,
  \item $f$: an orientation-preserving homeomorphism on $D^{2}$ such that $f(Q)=Q$,
  \item $\{h_t\}_{t\in I}$: an isotopy from the identity to $f$,
  \item $\omega_f$: a path in $\partial D^2$ from $q_0$ to $f(q_0)$ defined by $\omega_f(t)=h_t(q_0)$.
\end{itemize}

Based on convention above, we regard $B_n$ as the fundamental group $\pi_1(C_n(D^2), [q_1, q_2, \ldots, q_n])$ of the configuration space, i.e.
\begin{equation}\label{configandbraid}
B_n \cong \pi_1(C_n(D^2), [q_1, q_2, \ldots, q_n])
\end{equation}
where $C_n(D^2)$ is the orbit space $\{(u_{1}, u_{2}, \ldots, u_{n} )\in (D^{2})^n\mid u_{i}\neq u_{j}, \forall i\neq j\}/\Sigma_{n}$, where the symmetric group $\Sigma_{n}$ on $n$ letters admits a free action of ~$\Sigma_{n}$ on $(D^{2})^n$ given by $\mu\cdot(u_{1}, u_{2}, \ldots, u_{n} )=(u_{\mu(1)}, u_{\mu(2)}, \ldots, u_{\mu(n)} )$.

Note that there is a natural homomorphism ~$\mu: B_{n}\rightarrow \Sigma_{n}$ from the braid group ~$B_{n}$ to the symmetric group ~$\Sigma_{n}$ defined by ~$\mu(\sigma_{i})=(i, i+1), ~1\leq i\leq n-1$. Its kernel $P_{n}=ker\mu$ is called {\em the pure braid group}. It is known that $P_n$ has generators
 ~$A_{ij}=\sigma_{j-1}\sigma_{j-2}\ldots
\sigma_{i+1}\sigma_{i}^{2}\sigma_{i+1}^{-1}\ldots
\sigma_{j-2}^{-1}\sigma_{j-1}^{-1}, (1\leq i< j\leq n)$.
For each ~$2\leq j\leq n, ~U_{j}=\langle A_{ij}\mid 1\leq i <j \rangle$ is a free subgroup of rank ~$j-1$ of ~$B_{n}$.
We write $B_{n+s}^{n}$ the subgroup of $B_{n+s}$ consisting of those
elements for which induced automorphism of the symmetric group ~$\Sigma_{n+s}$ fixed ~$(n+1, \ldots, n+s)$ point-wisely.
In this paper, we focus on the subgroup $B_{n+1}^n$ of $B_{n+1}$.

\begin{lem}\label{biandetoushe}
(see \cite[Page 19, 23, 24]{Bir}) There is an exact sequence
\begin{equation}\label{eq}
1\to U_{n+1} \to B_{n+1}^n \stackrel{p_1}{\to} B_n\to 1,
\end{equation}
with return homomorphism $\iota_{1}: B_{n}\hookrightarrow B_{n+1}^{n}$, which is defined by
\begin{equation}\label{eq:return}
    [\gamma^{(1)}, \ldots, \gamma^{(n)}] \mapsto
    [c_{q_1},\ldots, c_{q_n}, \omega_{n+1}^{-1}]
    \ast [\gamma^{(1)}, \ldots, \gamma^{(n)}, c_{q_0}]
    \ast [c_{q_1},\ldots, c_{q_n}, \omega_{n+1}].
\end{equation}
Furthermore, $U_{n+1}$ is the free group of rank $n$, which is identified with $\pi_1(\dmq , q_0)$ by
\begin{equation}\label{eq:U}
     \gamma \stackrel{\phi}{\mapsto}
    [c_{q_1},\ldots, c_{q_n}, \omega_{n+1}^{-1}\gamma\omega_{n+1}],
\end{equation}
especially $\phi(\nu_i) = A_{i, n+1}$. Here $c_{q_{i}}$ is the constant path at $q_{i} (i=1,2,...n)$ and $\{\nu_{1},...,\nu_{n}\}$ is the set of generators of $\pi_1(\dmq , q_0)$.
\end{lem}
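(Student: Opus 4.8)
The natural framework is to realise $B^n_{n+1}$ as the fundamental group of a configuration space and then invoke the Fadell--Neuwirth fibration, which is essentially Birman's route. Let $C_{n,1}(D^2)$ denote the space of configurations in which the first $n$ points are unordered and the last point is distinguished. Since $B^n_{n+1}$ is by definition the subgroup of $B_{n+1}$ whose induced permutation fixes the index $n+1$, the covering of $C_{n+1}(D^2)$ that singles out one point realises this stabilizer, so $\pi_1(C_{n,1}(D^2))\cong B^n_{n+1}$. The map $p_1$ that forgets the distinguished point is the Fadell--Neuwirth fibration $C_{n,1}(D^2)\to C_n(D^2)$, whose fibre over $[q_1,\ldots,q_n]$ is precisely the disk with those $n$ points deleted, namely $\dmq$.

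First I would write down the long exact sequence in homotopy of this fibration. The base $C_n(D^2)$ is a $K(B_n,1)$ and the fibre $\dmq$ is homotopy equivalent to a wedge of $n$ circles, hence aspherical; in particular $\pi_2(C_n(D^2))=0$ and $\pi_i(\dmq)=0$ for $i\ge 2$. The long exact sequence therefore collapses to
\[ 1 \to \pi_1(\dmq,q_0) \to \pi_1(C_{n,1}(D^2)) \xrightarrow{p_1} \pi_1(C_n(D^2)) \to 1, \]
which, modulo identifying the kernel with $U_{n+1}$, is the asserted sequence $1\to U_{n+1}\to B^n_{n+1}\xrightarrow{p_1}B_n\to 1$. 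Because $\dmq\simeq\bigvee_n S^1$, its fundamental group is free of rank $n$, generated by the loops $\nu_1,\ldots,\nu_n$, which gives the freeness claim at once.

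Next I would produce the section $\iota_1$ geometrically; surjectivity of $p_1$ is already automatic since the fibre is connected, so the role of $\iota_1$ is to exhibit the semidirect product structure $B^n_{n+1}\cong U_{n+1}\rtimes B_n$. Given $[\gamma^{(1)},\ldots,\gamma^{(n)}]\in B_n$, one inserts an $(n+1)$-st strand parked at the boundary point $q_0$: the segment $\omega_{n+1}$ ferries this strand between its home $q_{n+1}$ and $q_0$, and because $q_0\in\partial D^2$ lies off the support of the interior braiding, the extra strand never links the others. This is exactly the conjugated concatenation in \eqref{eq:return}. Applying $p_1$ deletes the last strand and turns the two $\omega_{n+1}$-conjugators into constant loops, returning the original braid, so $p_1\circ\iota_1=\mathrm{id}$; a short check that path concatenation is respected shows $\iota_1$ is a homomorphism and hence an honest splitting.

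It remains to match the topological and algebraic generators, that is, to verify $\phi(\nu_i)=A_{i,n+1}$ under \eqref{eq:U}, and this is the step I expect to be the main obstacle: it is careful path bookkeeping rather than general nonsense. One must trace the loop $\omega_{n+1}^{-1}\nu_i\omega_{n+1}$, in which the $(n+1)$-st strand encircles $q_i$ as read through the conjugating segment, and recognise the resulting $(n+1)$-braid as the pure-braid generator $A_{i,n+1}=\sigma_n\sigma_{n-1}\cdots\sigma_{i+1}\sigma_i^2\sigma_{i+1}^{-1}\cdots\sigma_{n-1}^{-1}\sigma_n^{-1}$. Carrying this out for every $i$ identifies $\{A_{i,n+1}\}$ with the image of a free basis of $\pi_1(\dmq,q_0)$, confirming simultaneously that the kernel is exactly $U_{n+1}=\langle A_{i,n+1}\mid 1\le i\le n\rangle$ and that $\phi$ is the claimed isomorphism.
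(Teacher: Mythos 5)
The paper gives no proof of this lemma at all---it is quoted from Birman \cite[pp.~19, 23, 24]{Bir}---and your argument via the Fadell--Neuwirth fibration $C_{n,1}(D^2)\to C_n(D^2)$ with fibre $\dmq$, asphericity of base and fibre collapsing the long exact sequence, the boundary-parked strand giving the section $\iota_1$, and the bookkeeping check $\phi(\nu_i)=A_{i,n+1}$, is precisely the standard proof in that cited source. Your proposal is correct and takes essentially the same route as the paper's (cited) proof.
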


Hence, the braid group ~$B_{n+1}^{n}$ is a semi-direct product of $U_{n+1}$ and ~$B_{n}$. Our convention of $q_i$'s makes $A_{i,n+1}$ to be the usual geometric braid  if we look at the cylinder $D^2\times I$ along the vector $(0,1,0)$.

\begin{defn}
An isotopy $\{h_t\}_{t\in I}: D^2\to D^2$ between two orientation-preserving homomorphisms on $(D^2, Q)$ is said to determine an $n$-strand braid $\beta$ if
$$
\langle\myev_{Q}\circ h_t\rangle = \beta \in \pi_1(C_n(D^2), [q_1, \ldots, q_n]),$$
where $h_t$ is regarded as a path in space $\mathrm{Home}(D^2)$ of homeomorphisms on $D^2$, and $\myev_{Q}: \mathrm{Home}(D^2)\to C_n(D^2)$ is the evaluation map given by $g\mapsto [g(q_1),\ldots, g(q_n)]$.
\end{defn}

If an isotopy $\{h_t\}_{t\in I}$ determines an $n$-strand braid, then the set $Q$ must be invariant under homeomorphisms on both ends of such an isotopy, i.e. $h_0(Q)=h_1(Q)=Q$.

\begin{lem}\label{betam}
Let $\{h_t\}_{t\in I}: D^2\to D^2$ be an isotopy from the identity to a homeomorphism $f: (D^2, Q)\to (D^2, Q)$, determining an $n$-strand braid $\beta$. Then for any positive integer $m$, the isotopy $h_t\ast(f\circ h_t)\ast\cdots\ast(f^{m-1}\circ h_t)$ determines the braid $\beta^m$, where $\ast$ means the join of isotopies.
\end{lem}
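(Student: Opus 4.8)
The plan is to reduce the braid determined by the concatenated isotopy to a product of $m$ copies of $\beta$ inside $\pi_1(C_n(D^2),[q_1,\ldots,q_n])$, using functoriality of the evaluation map together with the fact that $f$ induces a self-map of the configuration space that is freely homotopic to the identity.

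First I would record the elementary observation that $\myev_Q$ is continuous and hence commutes with concatenation of paths. Writing $g^{(k)}_t:=f^k\circ h_t$ for $k=0,\ldots,m-1$, the junctions match because $g^{(k)}_1=f^{k+1}=g^{(k+1)}_0$, so the join $G_t:=g^{(0)}_t\ast\cdots\ast g^{(m-1)}_t$ is a genuine isotopy from the identity to $f^m$, and
\[
\myev_Q\circ G_t=(\myev_Q\circ g^{(0)}_t)\ast\cdots\ast(\myev_Q\circ g^{(m-1)}_t).
\]
Denoting by $\ell_k$ the loop $t\mapsto\myev_Q(f^k\circ h_t)$, each $\ell_k$ is based at $[q_1,\ldots,q_n]$ precisely because $f^k(Q)=Q$, so the braid determined by $G_t$ is the product $[\ell_0][\ell_1]\cdots[\ell_{m-1}]$ in $\pi_1(C_n(D^2),[q_1,\ldots,q_n])$. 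It therefore suffices to prove $[\ell_k]=\beta$ for every $k$.

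Next I would let $F\colon C_n(D^2)\to C_n(D^2)$ be the self-homeomorphism $[u_1,\ldots,u_n]\mapsto[f(u_1),\ldots,f(u_n)]$ induced by $f$; since $f(Q)=Q$ it fixes the basepoint, and $\ell_k=F^k\circ(\myev_Q\circ h_t)$, whence $[\ell_k]=(F^k)_*(\beta)=(F_*)^k(\beta)$ by functoriality. The crucial point is that $F$ is freely homotopic to $\mathrm{id}_{C_n(D^2)}$: each $h_t$ is a homeomorphism of $D^2$ and so induces $H_t\colon[u_1,\ldots,u_n]\mapsto[h_t(u_1),\ldots,h_t(u_n)]$, a well-defined continuous homotopy with $H_0=\mathrm{id}$ and $H_1=F$. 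Under this homotopy the basepoint travels exactly along the loop $\myev_Q\circ h_t$ representing $\beta$.

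Finally I would invoke the standard change-of-basepoint formula for a free homotopy: since $H_0=\mathrm{id}$, $H_1=F$, and the basepoint tracks the loop $\beta$, the induced map $F_*$ differs from $\mathrm{id}=(\mathrm{id})_*$ by conjugation with the class of this loop, so $F_*(\gamma)=\beta^{-1}\gamma\beta$ (up to the orientation convention for path composition). In particular $F_*(\beta)=\beta$ because $\beta$ commutes with itself, and iterating yields $(F_*)^k(\beta)=\beta$ for all $k$. Hence $[\ell_k]=\beta$ and the determined braid is $\beta\cdots\beta=\beta^m$. The step demanding the most care is this free-homotopy argument: one must keep track of the moving basepoint and confirm that the tracking loop is precisely $\beta$ rather than an inverse or a general conjugate, since it is exactly this that forces $F_*$ to fix $\beta$; checking that the join is well defined at the junction times $t=k/m$ and that each $\ell_k$ is a based loop are the routine verifications underpinning the argument.
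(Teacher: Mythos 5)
Your proof is correct, and it takes a genuinely different route from the paper's. The paper argues by induction on $m$: it forms the square $(s,t)\mapsto(h_t\ast(f\circ h_t)\ast\cdots\ast(f^{k-1}\circ h_t))\circ h_s$ in $\mathrm{Home}(D^2)$, reads off from its boundary an end-point-preserving homotopy between the $(k+1)$-fold join and $h_s\ast((\mbox{$k$-fold join})\circ f)$, and then applies $\myev_{Q}$, using the precomposition invariance $\myev_{Q}(g\circ f)=\myev_{Q}(g)$ (valid since $f(Q)=Q$) to conclude that the $(k+1)$-fold join determines $\beta\cdot\beta^{k}$. You avoid induction altogether: you split the evaluated join into the $m$ based loops $\ell_k=\myev_{Q}\circ(f^k\circ h_t)$, identify $\ell_k=F^k\circ(\myev_{Q}\circ h_t)$ where $F$ is the self-map of $C_n(D^2)$ induced by postcomposition with $f$, and invoke the standard basepoint-change lemma for the free homotopy $H_t$ induced by $h_t$: the basepoint traces exactly the loop representing $\beta$, so $F_*$ is conjugation by $\beta$ and therefore fixes $\beta$, giving $[\ell_k]=\beta$ for every $k$. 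The two arguments thus use the hypothesis $f(Q)=Q$ in dual ways --- the paper through invariance of $\myev_{Q}$ under precomposition by $f$, you through the fact that postcomposition by $f$ descends to a basepoint-preserving self-map of the configuration space. The paper's induction buys brevity and self-containedness (no appeal to an external lemma); your argument buys a sharper, non-inductive conclusion --- each block $[\ell_k]$ individually equals $\beta$, not merely the full product --- and isolates the reusable structural fact that $F_*$ acts on $\pi_1(C_n(D^2),[q_1,\ldots,q_n])$ as conjugation by $\beta$, which is close in spirit to the semidirect-product structure of $B_{n+1}^{n}$ exploited later in the paper. Your closing remark about the orientation convention is exactly the right point of care: since only $F_*(\beta)=\beta$ is needed, the left/right ambiguity in the conjugation formula is harmless.
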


\begin{proof}
We shall prove this lemma by induction on $m$. It is obviously true if $m=1$. Assume inductively that the isotopy $h_t\ast(f\circ h_t)\ast\cdots\ast(f^{k-1}\circ h_t)$ determines the braid $\beta^k$, i.e.
$$\myev_{Q}(h_t\ast(f\circ h_t)\ast\cdots\ast(f^{k-1}\circ h_t)) = \beta^k.$$
Note that the correspondence
$$(s,t)\mapsto(h_t\ast(f\circ h_t)\ast\cdots\ast(f^{k-1}\circ h_t))\circ h_s$$
gives us a map from $I\times I$ to $\mathrm{Home}(D^2)$. The four parts of the boundary of $I\times I$ yields a homotopy of paths with end-points keeping, i.e.
$$((h_t\ast(f\circ h_t)\ast\cdots\ast(f^{k-1}\circ h_t))\circ h_0)\ast (f^k\circ h_s) \dot{\simeq }
h_s \ast ((h_t\ast(f\circ h_t)\ast\cdots\ast(f^{k-1}\circ h_t))\circ h_1).$$
Since $h_1=f$ and $h_0=id$, we obtain that
$$h_t\ast(f\circ h_t)\ast\cdots\ast(f^{k-1}\circ h_t)\ast (f^k
 \circ h_s) \dot{\simeq }
h_s \ast ((h_t\ast(f\circ h_t)\ast\cdots\ast(f^{k-1}\circ h_t))\circ f).$$
It follows that
\begin{eqnarray*}
  & &  \myev_{Q}(h_t\ast(f\circ h_t)\ast\cdots\ast(f^{k-1}\circ h_t)\ast (f^k
 \circ h_s)) \\
 &\dot{\simeq }&
   \myev_{Q} (h_s \ast ((h_t\ast(f\circ h_t)\ast\cdots\ast(f^{k-1}\circ h_t))\circ f)\\
 & = &
   \myev_{Q} (h_s) \ast \myev_{Q}((h_t\ast(f\circ h_t)\ast\cdots\ast(f^{k-1}\circ h_t))\circ f).
\end{eqnarray*}
Since $f(Q) = Q$, we have that $\myev_{Q}((h_t\ast(f\circ h_t)\ast\cdots\ast(f^{k-1}\circ h_t))\circ f)= \myev_{Q}(h_t\ast(f\circ h_t)\ast\cdots\ast(f^{k-1}\circ h_t))$, which is $\beta^k$ by induction hypothesis. Note that $\myev_{Q} (h_s) =\beta$. Thus, we are done.
\end{proof}

\section{Coordinates for fixed point classes}

In this section, we shall give a kind of braid coordinates for fixed point and hence fixed point class.
Let $f$ be an orientation-preserving homeomorphism on $D^{2}$ such that $f(Q)=Q$.
We extend the idea of \cite{Gu} into the fixed points of all iterations of the given map $f$. Moreover, all braid coordinates lie in the same group $B^n_{n+1}$. More concrete constructions are given by using the identification in Lemma~\ref{biandetoushe}.

Let us recall some facts about fixed point coordinates in the fundamental group, see \cite{J3} for more details. Consider a fixed point $y$ of $f$ on $\dmq $, pick a path $\eta_y$ in $\dmq$ from the base point $q_0$ to $y$, then the $\fp$-conjugacy class of the element $\langle\omega_f(f\eta_y)\eta_y^{-1}\rangle\in \pi_1(\dmq, q_0)$ is independent of the choices of path $\eta_y$, and therefore is said to be the coordinate of $y$ in $\pi_1(\dmq , q_0)$, where $\omega_f$ is the chosen path from $q_0$ to $f(q_0)$. Two elements $\gamma'$ and $\gamma$ are said to be $\fp$-conjugate if $\gamma'=\fp(\alpha)\gamma\alpha^{-1}$ for some $\alpha\in \pi_1(\dmq , q_0)$. Here, $\fp : \pi_1(\dmq , q_0)\to \pi_1(\dmq , q_0)$ is defined by $\alpha\mapsto \omega_f \alpha \omega_f^{-1}$. Two fixed points $y$ and $y'$ are said to be in the same Nielsen fixed point class, denoted  $y\sim_N y'$, if they have the same coordinate in $\pi_1(\dmq , q_0)$. In all, there is an injective correspondence
\begin{eqnarray}
\varrho_{m}: \mathrm{Fix}(f^m|_{\dmq})/\sim_N \to \pi_1(\dmq , q_0)/\sim_{f^m_{\pi}}.
\end{eqnarray}

Let us consider the braids derived from fixed points of $f^m$.

\begin{lem}\label{dyz}
For any positive integer $m$, the correspondence
$$\mathrm{Fix}(f^m)\ni y\mapsto
 \langle [c_{q_1}, \ldots, c_{q_n}, \eta]
 \ast \myev_{Q\cup\{y\}}(h_t\ast(f\circ h_t)\ast\cdots\ast(f^{m-1}\circ h_t))
  \ast [c_{q_1}, \ldots,  c_{q_n}, \eta^{-1}]
 \rangle
$$ gives rise to a well-defined map
$$\Theta_m: \mathrm{Fix}(f ^m|_{\dmq}) \to  B^{n}_{n+1}/\sim_U,$$
where $c_{q_i}$ is the constant path at $q_i (i=1,2,...n)$, $\eta$ is a path in $\dmq$ from $q_{n+1}$ to $y$, and  $\sim_U$ is the conjugacy relation in $B^n_{n+1}$ restricting the conjugation to elements from $U_{n+1}$.
\end{lem}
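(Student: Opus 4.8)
The plan is to establish two separate facts: first, that for each fixed point $y$ and each choice of connecting path $\eta$ the bracketed expression is a genuine element of $B^{n}_{n+1}$; and second, that changing $\eta$ alters this element only by conjugation by a member of $U_{n+1}$, so that its class in $B^{n}_{n+1}/\!\sim_U$ is unambiguous. Throughout I abbreviate $[c_{q_1},\ldots,c_{q_n},\gamma]$ by $[c_Q,\gamma]$, and I write $G_t := h_t\ast(f\circ h_t)\ast\cdots\ast(f^{m-1}\circ h_t)$ for the concatenated isotopy of Lemma~\ref{betam}, noting that $G_0=\mathrm{id}$ and $G_1=f^m$.

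For the first fact I would observe that for each fixed $t$ the map $G_t$ is one of the homeomorphisms $f^{j}\circ h_{t'}$, hence a homeomorphism of $D^2$; since the $n+1$ points $q_1,\ldots,q_n,y$ are distinct and $y\in\dmq$, their images under $G_t$ remain distinct, so $\myev_{Q\cup\{y\}}(G_t)$ is a bona fide path in $C_{n+1}(D^2)$. Its initial configuration is $[q_1,\ldots,q_n,y]$ and, because $f^m(Q)=Q$ and $f^m(y)=y$, its terminal configuration is the same unordered set. The flanking paths $[c_Q,\eta]$ and $[c_Q,\eta^{-1}]$, with $\eta$ running from $q_{n+1}$ to $y$ in $\dmq$, then splice this into a loop based at $[q_1,\ldots,q_n,q_{n+1}]$, yielding an element of $B_{n+1}$. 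Tracking the last strand — it runs from $q_{n+1}$ to $y$ along $\eta$, then from $y$ back to $y$ along $t\mapsto G_t(y)$, then from $y$ to $q_{n+1}$ along $\eta^{-1}$ — shows it returns to the $(n+1)$-st position, while the first $n$ strands permute within $Q$; hence the induced permutation fixes $n+1$ and the element lies in $B^{n}_{n+1}$.

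For the second fact, let $\eta'$ be a second path from $q_{n+1}$ to $y$ and write $b_\eta,b_{\eta'}$ for the corresponding loops. Using the elementary homotopies $[c_Q,\eta']\mathbin{\dot\simeq}[c_Q,\eta'\ast\eta^{-1}]\ast[c_Q,\eta]$ and $[c_Q,\eta'^{-1}]\mathbin{\dot\simeq}[c_Q,\eta^{-1}]\ast[c_Q,\eta\ast\eta'^{-1}]$ (both rel endpoints), I would regroup to obtain $b_{\eta'}=g\,b_\eta\,g^{-1}$, where $g=\langle[c_Q,\eta'\ast\eta^{-1}]\rangle$ and $\eta'\ast\eta^{-1}$ is a loop at $q_{n+1}$ in $\dmq$. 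It then remains to recognize $g$ as an element of $U_{n+1}$: taking $\gamma$ to be the loop $\omega_{n+1}(\eta'\ast\eta^{-1})\omega_{n+1}^{-1}$ at $q_0$, the defining formula $\phi(\gamma)=[c_Q,\omega_{n+1}^{-1}\gamma\omega_{n+1}]$ of Lemma~\ref{biandetoushe} collapses by cancellation to $\phi(\gamma)=[c_Q,\eta'\ast\eta^{-1}]=g$, so $g\in\phi(\pi_1(\dmq,q_0))=U_{n+1}$. Since $U_{n+1}\subset B^{n}_{n+1}$ and conjugation preserves $B^{n}_{n+1}$, this gives $b_{\eta'}\sim_U b_\eta$, establishing well-definedness.

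I expect the only delicate step to be this last identification of $g$ with a member of $U_{n+1}$: one must match the base point $q_{n+1}$ of the loop $\eta'\ast\eta^{-1}$ against the base point $q_0$ used in the isomorphism $\phi$, which is exactly the role of the conjugating segment $\omega_{n+1}$, and it is here that the specific identification of Lemma~\ref{biandetoushe} is indispensable. The remaining verifications — that each $G_t$ is a homeomorphism, that the endpoints of $\myev_{Q\cup\{y\}}(G_t)$ agree, and that the last strand closes up at position $n+1$ — are routine once the configuration-space bookkeeping is set up carefully.
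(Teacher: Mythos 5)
Your proof is correct and follows essentially the same route as the paper's: exhibit the spliced path as a loop in $C_{n+1}(D^2)$ based at $[q_1,\ldots,q_{n+1}]$ lying in $B^n_{n+1}$, then show that replacing $\eta$ by $\eta'$ conjugates the braid by $[c_{q_1},\ldots,c_{q_n},\eta'\eta^{-1}]$, which Lemma~\ref{biandetoushe} identifies as $\phi$ of a loop at $q_0$ and hence as an element of $U_{n+1}$. Incidentally, your form of that identification, $\phi\bigl(\omega_{n+1}(\eta'\eta^{-1})\omega_{n+1}^{-1}\bigr)=[c_{q_1},\ldots,c_{q_n},\eta'\eta^{-1}]$, is the correct one under the paper's left-to-right path composition; the paper's proof writes the conjugating segments in the opposite order, which appears to be a typo.
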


\begin{proof}
Since $f^m(y)=y$, the path
$$
 \langle [c_{q_1}, \ldots, c_{q_n}, \eta]
 \ast \myev_{Q\cup\{y\}}(h_t\ast(f\circ h_t)\ast\cdots\ast(f^{m-1}\circ h_t))
  \ast [c_{q_1}, \ldots,  c_{q_n}, \eta^{-1}]
 \rangle
$$
is a loop in the configuration space $C_{n+1}(D^2)$ with based point $[q_1,\ldots, q_{n+1}]$, which is obviously an element in $B^n_{n+1}$.

If we have another path $\eta'$ in $\dmq$ from $q_{n+1}$ to $y$, then
$$\Theta_m(y, \eta')
= [c_{q_1}, \ldots, c_{q_n}, \eta'\eta^{-1}] \,
\Theta_m(y, \eta)\,
[c_{q_1}, \ldots, c_{q_n}, \eta'\eta^{-1}]^{-1}.$$
By Lemma~\ref{biandetoushe}, we know that $[c_{q_1}, \ldots, c_{q_n}, \eta'\eta^{-1}] = \phi(\omega_{n+1}^{-1}\eta'\eta^{-1}\omega_{n+1})$. This implies that $[c_{q_1}, \ldots, c_{q_n}, \eta'\eta^{-1}]$ lies in $U_{n+1}$.
Thus, different choices of $\eta$ yield the equivalent relation $\sim_U$.
\end{proof}

It should be mentioned that for any $\mu, \nu\in U_{n+1}$,
$\mu\, \iota_{1}(\beta^m)\nu \mu^{-1}=\iota_{1}(\beta^m)\, \iota_{1}(\beta^{-m})\mu\, \iota_{1}(\beta^m)\nu \mu^{-1}$ is still in the form $\iota_{1}(\beta^{m})\mu'$ because $U_{n+1}$ is a normal subgroup of $B^n_{n+1}$.

\begin{thm}\label{pitobraid}

If a fixed point $y$ of $f^m$ on $\dmq$ has a coordinate $\gamma$ in $\pi_1(\dmq,q_{0})$, then the corresponding braid $\Theta_m(y)$ in $B^n_{n+1}$ is $\iota_{1}(\beta^m)\phi(\gamma)$, where $\iota_{1}, \phi$ are homomorphisms given in Lemma~\ref{biandetoushe}.
\end{thm}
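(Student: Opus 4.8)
The plan is to prove the sharper representative-level identity $\Theta_m(y)=\iota_{1}(\beta^m)\phi(\gamma)$ in $B^n_{n+1}$ for a compatibly chosen path, from which the asserted equality of $\sim_U$-classes follows by Lemma~\ref{dyz}. Write $H_t:=h_t\ast(f\circ h_t)\ast\cdots\ast(f^{m-1}\circ h_t)$, so $H_0=\mathrm{id}$, $H_1=f^m$, and $\myev_{Q}(H_t)=\beta^m$ by Lemma~\ref{betam}; put $\omega_{f^m}(t):=H_t(q_0)$, which lies in $\partial D^2$ because $f$ preserves $\partial D^2$ and each $h_t(q_0)\in\partial D^2$. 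Let $\eta_y$ be the path in $\dmq$ from $q_0$ to $y$ realizing the coordinate $\gamma=\langle\omega_{f^m}\ast(f^m\circ\eta_y)\ast\eta_y^{-1}\rangle$, and take the compatible choice $\eta:=\omega_{n+1}^{-1}\ast\eta_y$ in Lemma~\ref{dyz}, so that $\Theta_m(y)=E\ast G\ast E^{-1}$ with $E:=[c_{q_1},\ldots,c_{q_n},\eta]$ and $G:=[H_t(q_1),\ldots,H_t(q_n),H_t(y)]$. As orientation, note that applying $p_1$ sends both sides to $\beta^m$ (since $p_1\circ\iota_1=\mathrm{id}$ and $U_{n+1}=\ker p_1$); thus the two sides already agree in $B_n$, and the whole content lies in the behaviour of the last strand.

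The first step is to rewrite the middle factor $G$ by the square-homotopy device used in Lemma~\ref{betam}. Consider the map $I\times I\to C_{n+1}(D^2)$, $(t,s)\mapsto[H_t(q_1),\ldots,H_t(q_n),H_t(\eta_y(s))]$, i.e. $H_t$ evaluated on the $(n+1)$-point set $Q\cup\{\eta_y(s)\}$; this is well defined since $\eta_y$ avoids $Q$ and $H_t$ is a homeomorphism. Its four edges are the top ($s=1$) $=G$, the left ($t=0$) $=\mathcal{L}:=[c_{q_1},\ldots,c_{q_n},\eta_y]$, the bottom ($s=0$) $=\mathcal{B}:=[H_t(q_1),\ldots,H_t(q_n),\omega_{f^m}(t)]$, and the right ($t=1$) $=\mathcal{R}:=[c_{q_1},\ldots,c_{q_n},f^m\circ\eta_y]$. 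Reading the boundary of the square gives $\mathcal{L}\ast G\,\dot{\simeq}\,\mathcal{B}\ast\mathcal{R}$, hence $G\,\dot{\simeq}\,\mathcal{L}^{-1}\ast\mathcal{B}\ast\mathcal{R}$. Since $\eta=\omega_{n+1}^{-1}\ast\eta_y$, collapsing the constant first $n$ coordinates yields $E\ast\mathcal{L}^{-1}\,\dot{\simeq}\,[c_{q_1},\ldots,c_{q_n},\omega_{n+1}^{-1}]$ and $\mathcal{R}\ast E^{-1}\,\dot{\simeq}\,[c_{q_1},\ldots,c_{q_n},(f^m\circ\eta_y)\ast\eta_y^{-1}\ast\omega_{n+1}]$, so that
\[
\Theta_m(y)\ \dot{\simeq}\ [c_{q_1},\ldots,c_{q_n},\omega_{n+1}^{-1}]\ast\mathcal{B}\ast[c_{q_1},\ldots,c_{q_n},(f^m\circ\eta_y)\ast\eta_y^{-1}\ast\omega_{n+1}].
\]

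On the other hand, expanding $\iota_1(\beta^m)$ via \eqref{eq:return} with $\beta^m=\myev_Q(H_t)$ and $\phi(\gamma)$ via \eqref{eq:U}, and merging the adjacent $\omega_{n+1}^{\pm1}$ excursions, gives
\[
\iota_1(\beta^m)\phi(\gamma)=[c_{q_1},\ldots,c_{q_n},\omega_{n+1}^{-1}]\ast[H_t(q_1),\ldots,H_t(q_n),c_{q_0}]\ast[c_{q_1},\ldots,c_{q_n},\gamma\ast\omega_{n+1}].
\]
Substituting $\gamma=\omega_{f^m}\ast(f^m\circ\eta_y)\ast\eta_y^{-1}$ and comparing with the previous display, one checks that both loops have identical first $n$ strands (they rest, perform $\beta^m$, then rest) and identical last strand, namely the loop $\omega_{n+1}^{-1}\ast\omega_{f^m}\ast(f^m\circ\eta_y)\ast\eta_y^{-1}\ast\omega_{n+1}$ at $q_{n+1}$. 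The sole difference is timing: in $\mathcal{B}$ the last strand traverses $\omega_{f^m}$ while the first $n$ strands braid, whereas in $\iota_1(\beta^m)$ it rests at $q_0$ during the braiding and performs $\omega_{f^m}$ immediately afterwards.

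The main obstacle is precisely to justify that these two loops are homotopic in $C_{n+1}(D^2)$, i.e. that the excursion $\omega_{f^m}$ of the last strand may be slid past the interior braiding of the first $n$ strands without a collision. This is where the geometry enters. Each $H_t$ is a homeomorphism of $D^2$, hence preserves both $\partial D^2$ and $\mathrm{int}\,D^2$; so during the braiding the strands $H_t(q_i)$ remain in $\mathrm{int}\,D^2$ while $\omega_{f^m}$ remains in $\partial D^2$. As these regions are disjoint, the reparametrization delaying the $\omega_{f^m}$-portion of the last strand until after the braiding never brings it into contact with any of the first $n$ strands; the remaining interior part $(f^m\circ\eta_y)\ast\eta_y^{-1}\ast\omega_{n+1}$ occurs while the first $n$ strands rest on $Q$ and avoids $Q$ because $\eta_y$, $\omega_{n+1}$, and $f^m\circ\eta_y$ all lie in $\dmq$. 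This produces the required path homotopy, giving $\Theta_m(y)=\iota_1(\beta^m)\phi(\gamma)$; by Lemma~\ref{dyz} the left-hand side is independent of the choice of $\eta$ up to $\sim_U$, so the identity holds at the level of the invariants as asserted.
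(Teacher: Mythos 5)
Your proof is correct, and its skeleton matches the paper's own argument: the same compatible choice $\eta=\omega_{n+1}^{-1}\ast\eta_y$, the same square-homotopy device applied to the isotopy restricted to $\eta_y$, and the same regrouping that produces $\iota_{1}(\beta^m)\phi(\gamma)$. The difference lies in the order and the level at which the homotopies are carried out, and this difference matters. The paper (treating $m=1$, with $m>1$ declared ``similar'') first desynchronizes: it rewrites $\Theta_1(y)$ as $\iota_{1}(\beta)$ times a factor in which the last strand traverses the entire loop $\omega_{n+1}^{-1}\eta_y h_t(y)\eta_y^{-1}\omega_{n+1}$ while the first $n$ strands rest at $Q$, and only afterwards applies $h_t(y)\,\dot{\simeq}\,\eta_y^{-1}\omega_f(f\eta_y)$ inside that factor. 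Both of those steps are delicate, because the trajectory $h_t(y)$ and the square $(s,t)\mapsto h_t(\eta_y(s))$ are only guaranteed to avoid the \emph{moving} points $h_t(q_i)$ (by injectivity of $h_t$), not the \emph{resting} points of $Q$; so the paper's intermediate expression need not even be a well-defined path in $C_{n+1}(D^2)$, and the homotopy into it is not obviously collision-free. Your ordering avoids this entirely: you run the square homotopy inside $C_{n+1}(D^2)$ with the last strand still synchronized with the braiding strands, where injectivity of $H_t$ makes every stage well defined, and afterwards the only desynchronization needed is sliding the arc $\omega_{f^m}\subset\partial D^2$ past the braiding of points in $\mathrm{int}\,D^2$, which your boundary/interior disjointness argument justifies; the residual interior travel of the last strand happens while the other strands sit on $Q$ and stays in $\dmq$. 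You also handle all $m$ uniformly via Lemma~\ref{betam} rather than by analogy. In short, your route is the paper's strategy executed in a sounder order, and it in fact repairs a gap that the published proof glosses over.
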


\begin{proof}
Consider the case $m=1$. By definition of fixed point coordinate, there is a path $\eta_y$ from $q_0$ to $y$ such that $\gamma = \omega_f(f\eta_y)\eta^{-1}_y$. Hence $\omega^{-1}_{n+1}\eta_y$ is a path from $q_{n+1}$ to $y$. Note that the restriction of $h_t$ on $\eta_y$ gives a homotopy from $\eta_y$ to $f\eta_y$. It follows that $h_t(y)\dot{\simeq} \eta_y^{-1}\omega_f(f\eta_y)$. As a path in configuration space $C_{n+1}(D^2)$, we have that
\begin{eqnarray*}
& &
 \Theta_m(y)=
[h_t(q_1), \ldots, h_t(q_n), \omega^{-1}_{n+1}\eta_y h_t(y)\eta^{-1}_y\omega_{n+1}]\\
&\dot{\simeq} &
 [c_{q_1}, \ldots, c_{q_n}, \omega^{-1}_{n+1}]
 \ast[h_t(q_1), \ldots, h_t(q_n), c_{q_{0}}]
 \ast[c_{q_1}, \ldots, c_{q_n}, \omega_{n+1}] \\
& &
 \ \ast[c_{q_1}, \ldots, c_{q_n}, \omega^{-1}_{n+1}\eta_y h_t(y)\eta^{-1}_y\omega_{n+1}]\\
& \dot{\simeq} &
\iota_{1}(\beta)[c_{q_1}, \ldots, c_{q_n}, \omega^{-1}_{n+1}\eta_y (\eta_y^{-1}\omega_f(f\eta_y))\eta^{-1}_y\omega_{n+1}]\\
& \dot{\simeq} &
\iota_{1}(\beta)[c_{q_1}, \ldots, c_{q_n}, \omega^{-1}_{n+1}\omega_f(f\eta_y)\eta^{-1}_y\omega_{n+1}]\\
& = &
\iota_{1}(\beta)[c_{q_1}, \ldots, c_{q_n}, \omega^{-1}_{n+1}\gamma\omega_{n+1}]\\
& = &
\iota_{1}(\beta)\phi(\gamma).
\end{eqnarray*}
The proof for case of $m>1$ is similar.
\end{proof}

We may call $\Theta_m(y)$  a fixed point coordinate of $y$, because we know from \cite[Theorem 1]{Gu} that two fixed points $y$ and $y'$ of $f^m$ are in the same fixed point class, i.e. $\varrho_m(y)=\varrho_m(y')$, if and only if $\Theta_m(y)=\Theta_m(y')\in B^n_{n+1}/\sim_U$.

Another useful coordinate for fixed point class comes from the mapping torus, see \cite{J3}. In our case, the mapping torus
$T_{f|_{\dmq}}$ of $f|_{\dmq}$ is defined to be the space $(\dmq \times \mathbb{R})/(x,
s+1)\sim (f(x), s)$, where ~$x\in \dmq, s\in R$. The equivalence class of ~$(x, s)$ will be written as ~$[x, s]$. By the Van~Kampen theorem, we have
\begin{equation}\label{eqmappingtorus}
\pi_{1}(T_{f|_{\dmq}}, [q_{0},0])
  =
 \pi_{1}(D^{2}\backslash Q, q_{0})* \mathbb{Z}\big/
   \langle \delta\tau=\tau \fp (\delta), \delta\in
     \pi_{1}(D^{2}\backslash Q, q_{0})\rangle
\end{equation}
where $\fp : \pi_{1}(D^{2}\backslash Q, q_{0})\rightarrow
\pi_{1}(D^{2}\backslash Q, q_{0})$ is the automorphism which is induced by $\gamma\mapsto \omega_f\gamma\omega_f^{-1}$, and $\tau$ be the time-$1$ loop ~$\{[q_{0}, t]\}_{0\leq t\leq 1}$ generating $\mathbb{Z}$. For each positive integer $m$, any fixed point $y$ of $f^m$ on $\dmq$ has a well-defined coordinate $\{[y, mt]\}_{0\le t \le 1}$ in the set $\pi_1(T_{f|_{\dmq}}, [q_0,0])/\mathrm{conj}$, which is actually $\tau \gamma$ if $y$ has coordinate $\gamma$ in $\pi_1(\dmq, q_0)$.

We summarize the relations among all mentioned coordinates  as follows.

\begin{prop}
There is a following commutative diagram
$$
\xymatrix{
  \mathrm{Fix}(f^m)/\sim_N \ar[dr]_{\Theta_m} \ar[r]^{\varrho_m}
    &\pi_1(\dmq , q_0)/\sim_{\fp^m}    \ar[r]^{i_*\ \ \ \ } \ar[d]^{\iota_{1}(\beta^m)\phi(\cdot)}
    & \pi_1(T_{f|_{\dmq}}, [q_0,0])/\mathrm{conj} \ar[dl]_{\psi}
    \\
                & B^n_{n+1}/\sim_U     &
}
$$
where $i_*$ is induced by the natural inclusion, $\psi$ is induced by an injective homomorphism given by $\tau\mapsto \beta$, $\nu_i\mapsto A_{i, n+1}$.
\end{prop}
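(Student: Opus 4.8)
The plan is to read the diagram as two triangles sharing the vertical arrow $\gamma\mapsto\iota_{1}(\beta^m)\phi(\gamma)$, and to split the proposition into three tasks: that each of the three arrows into $B^n_{n+1}/\sim_U$ descends to the indicated quotients, that the left triangle commutes, and that the right triangle commutes. The single algebraic input on which almost everything rests is the identification of the $B_n$-action on $U_{n+1}$ coming from the semidirect decomposition of Lemma~\ref{biandetoushe} with the monodromy $\fp$; precisely, that
\begin{equation*}
\iota_{1}(\beta)^{-1}\,\phi(\delta)\,\iota_{1}(\beta)=\phi(\fp(\delta))\qquad\text{for all }\delta\in\pi_1(\dmq,q_0). \tag{$\star$}
\end{equation*}
I will call $(\star)$ the conjugation formula and treat its verification as the heart of the proof.

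For the left triangle, if $y\in\mathrm{Fix}(f^m)$ has coordinate $\gamma=\varrho_m(y)$, then Theorem~\ref{pitobraid} already gives $\Theta_m(y)=\iota_{1}(\beta^m)\phi(\gamma)$, so the triangle commutes on representatives; it remains to check that $\gamma\mapsto[\iota_{1}(\beta^m)\phi(\gamma)]$ is well defined on $\pi_1(\dmq,q_0)/\sim_{\fp^m}$. Iterating $(\star)$ yields $\phi(\fp^m(\alpha))=\iota_{1}(\beta^{-m})\phi(\alpha)\iota_{1}(\beta^m)$, so for $\gamma'=\fp^m(\alpha)\gamma\alpha^{-1}$ one computes $\iota_{1}(\beta^m)\phi(\gamma')=\phi(\alpha)\,\iota_{1}(\beta^m)\phi(\gamma)\,\phi(\alpha)^{-1}$, a $\sim_U$-conjugation since $\phi(\alpha)\in U_{n+1}$. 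This is the same cancellation already recorded in the remark after Lemma~\ref{dyz}, and it simultaneously shows that $\varrho_m$ followed by the vertical arrow agrees with $\Theta_m$ in $B^n_{n+1}/\sim_U$.

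For the right triangle, I would build the homomorphism $\Psi$ underlying $\psi$ explicitly from the presentation (\ref{eqmappingtorus}) by setting $\Psi(\tau)=\iota_{1}(\beta)$ and $\Psi(\nu_i)=A_{i,n+1}=\phi(\nu_i)$, so that $\Psi|_{\pi_1(\dmq)}=\phi$. The only relations to check are $\delta\tau=\tau\fp(\delta)$, which under $\Psi$ read $\phi(\delta)\iota_{1}(\beta)=\iota_{1}(\beta)\phi(\fp(\delta))$, i.e.\ exactly $(\star)$; hence $\Psi$ is a well-defined homomorphism. Reading $i_*$ as the mapping-torus coordinate $\gamma\mapsto[\tau^m\gamma]$ of an $f^m$-fixed point $y$ (one checks $\tau^m\gamma'=\alpha\,\tau^m\gamma\,\alpha^{-1}$ when $\gamma'=\fp^m(\alpha)\gamma\alpha^{-1}$, so $i_*$ is well defined, and since $\alpha$ is conjugated in through $\phi(\alpha)\in U_{n+1}$ the composite $\psi\circ i_*$ lands well defined in $B^n_{n+1}/\sim_U$), we obtain $\psi(i_*(\gamma))=\Psi(\tau^m\gamma)=\iota_{1}(\beta)^m\phi(\gamma)=\iota_{1}(\beta^m)\phi(\gamma)$, which is the vertical arrow. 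Injectivity of $\Psi$ (hence the injectivity of the homomorphism inducing $\psi$) follows from the matching semidirect decompositions: $\pi_1(T_{f|_{\dmq}})=\pi_1(\dmq)\rtimes_{\fp}\langle\tau\rangle$ is carried by $\Psi$ onto $U_{n+1}\rtimes\langle\iota_{1}(\beta)\rangle\le B^n_{n+1}=U_{n+1}\rtimes B_n$, with $\phi$ an isomorphism onto $U_{n+1}$ and, $B_n$ being torsion-free, $\beta$ of infinite order whenever it is nontrivial.

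The main obstacle is the conjugation formula $(\star)$, which is where the topology genuinely enters. I would prove it in the spirit of the computation in Theorem~\ref{pitobraid}: expand $\iota_{1}(\beta)^{-1}\phi(\nu_i)\iota_{1}(\beta)$ using the path formulas (\ref{eq:return}) and (\ref{eq:U}) together with the description of $\beta$ as $\langle\myev_Q\circ h_t\rangle$, and track the last ($(n+1)$-st) strand. As the punctures $q_1,\dots,q_n$ are carried by the isotopy $h_t$ from the identity to $f$, the loop $\nu_i$ around $q_i$ is dragged to $f\circ\nu_i$ and re-based through $\omega_f$, so the last strand records precisely $\omega_f(f\nu_i)\omega_f^{-1}=\fp(\nu_i)$; this is the Artin point-pushing interpretation of the braid action and can alternatively be cited from the references for $B^n_{n+1}$. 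One subtlety worth flagging is that $\psi$ is only guaranteed $\sim_U$-well-defined on the image of $i_*$: conjugation by $\tau$ in $\pi_1(T_{f|_{\dmq}})$ corresponds under $\Psi$ to conjugation by $\iota_{1}(\beta)\notin U_{n+1}$, so $\psi$ need not descend to $B^n_{n+1}/\sim_U$ on all of $\pi_1(T_{f|_{\dmq}})/\mathrm{conj}$; on the coordinate classes that actually occur this issue does not arise, which is all the diagram requires.
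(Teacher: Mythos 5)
Your route is the same as the paper's implicit one: the paper in fact gives no separate proof of this Proposition, presenting it as a summary, so that the left triangle is exactly Theorem~\ref{pitobraid} (with well-definedness supplied by Lemma~\ref{dyz} and the remark after it) and the right triangle is the definition of $\psi$ together with the statement that a fixed point of $f^m$ with coordinate $\gamma$ has mapping-torus coordinate $\tau^m\gamma$ (the paper writes $\tau\gamma$, a slip for $m>1$). What you add, correctly, is the one algebraic fact the paper never states but on which everything rests: the conjugation formula $(\star)$, $\iota_{1}(\beta)^{-1}\phi(\delta)\iota_{1}(\beta)=\phi(\fp(\delta))$. It is needed both for $\Psi$ to respect the relations of (\ref{eqmappingtorus}) and for the vertical arrow to descend to $\sim_{\fp^m}$-classes, and you have it with the correct orientation (the opposite convention would break the left triangle). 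Your sketch of its proof is completable exactly as you indicate: the square $(s,t)\mapsto[h_t(q_1),\ldots,h_t(q_n),h_t(\delta(s))]$ in $C_{n+1}(D^2)$ gives $[c_{q_1},\ldots,c_{q_n},\delta]\ast W\dot{\simeq}W\ast[c_{q_1},\ldots,c_{q_n},f\delta]$ with $W=[h_t(q_1),\ldots,h_t(q_n),\omega_f]$, and $W$ can be reordered into $[h_t(q_1),\ldots,h_t(q_n),c_{q_0}]\ast[c_{q_1},\ldots,c_{q_n},\omega_f]$ because each $h_t(q_i)$ stays interior while $h_t(q_0)=\omega_f(t)$ stays on $\partial D^2$, so no collision can occur. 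The injectivity argument, with your caveat $\beta\neq 1$, is also fine.

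The step that does not survive scrutiny is your closing patch: that on "the coordinate classes that actually occur" the failure of $\psi$ to be $\sim_U$-well-defined does not arise. For $m\geq 2$ it arises precisely on occurring classes. Conjugating $\tau^m\gamma$ by $\tau$ gives $\tau^m\fp^{-1}(\gamma)$, so $i_*$ always identifies the $\fp^m$-classes of $\gamma$ and $\fp^{-1}(\gamma)$; these are the coordinates of $y$ and $f(y)$, and $f(y)$ is a fixed point of $f^m$ whenever $y$ is, so both classes occur together. By $(\star)$, their images under the vertical arrow coincide in $B^n_{n+1}/\sim_U$ if and only if $\gamma\sim_{\fp^m}\fp^{-1}(\gamma)$, i.e.\ if and only if $y\sim_N f(y)$ as fixed points of $f^m$ --- which fails in general: the points of a least-period-$m$ orbit of a pseudo-Anosov representative lie in pairwise distinct Nielsen classes of $f^m$. (Purely algebraically: for $\beta=\sigma_1\in B_2$ and $m=2$, $\fp^2$ is conjugation by $x_1x_2$, and $x_1$, $\fp^{-1}(x_1)=x_2$ are not $\fp^2$-twisted conjugate, yet $\tau^2x_1$ and $\tau^2x_2$ are conjugate in $\pi_1(T_{f|_{\dmq}})$.) So for $m\geq 2$ no map $\psi$ --- on all of $\pi_1(T_{f|_{\dmq}})/\mathrm{conj}$ or even on the image of $i_*$ --- can make the right triangle commute. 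The defect is in the paper's formulation, which silently treats $\Psi$ as inducing a map on full conjugacy classes; the honest statement, and the one your computation $\Psi(\tau^m\gamma)=\iota_{1}(\beta^m)\phi(\gamma)$ actually proves, is commutativity on representatives (equivalently, one must coarsen $\sim_U$ to allow conjugation by $\iota_{1}(\beta)$), while the quotient-level statement is literally correct only for $m=1$, where $i_*$ is injective because $\fp(\gamma)=\fp(\gamma)\gamma\gamma^{-1}\sim_{\fp}\gamma$. Replace the claim that the issue "does not arise" by this restriction and your proof is complete.
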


The set  $\pi_1(T_{f|_{\dmq}}, [q_0,0])/\mathrm{conj}$ of conjugacy classes in $\pi_1(T_{f|_{\dmq}}, [q_0,0])$ contains all possible coordinates of fixed points of all iterations of the given homeomorphism $f$. The set $B^n_{n+1}/\sim_U$ contains coordinates of fixed points of all iterations of all homeomorphisms on $(D^2,Q)$. Thus, braids from different homeomorphisms can be compared. We can show our main definition.

\begin{defn}
A braid $\gamma\in B^n_{n+1}$ is said to be $(m, U)$-forced by $\beta\in B_n$ if, for any isotopy  $\{h_t\}_{t\in I}: D^2\to D^2$ from the identity to a homeomorphism $f: (D^2, Q)\to (D^2, Q)$, determining an $n$-strand braid $\beta$, there is a fixed point $y$ of $f^m|_{\dmq}$ such that $\Theta_m(y) = \gamma\in B_{n+1}^n/\sim_U$.
\end{defn}

Let us give some remarks about general forcing relations. A fixed point may correspond to different braids even an isotopy is given. Thus, forcing relations were considered as those among ``braid type'', see \cite{Bo1,Boyland1994} or \cite{Ma}. A braid type is usually regarded as a conjugacy class of a braid. A difference by a power of full-twists is allowed if one considers homeomorphisms on whole plane or on $D^2$ but freely at $\partial D^2$. The conjugation of a given braid comes from the choices of base points in the configuration space $C_n(D^2)$, which is solved here by fixing base point $[q_1, \ldots, q_n]$.  Thus, our forcing relation is more delicate, because the relation $\sim_U$ is smaller than the conjugacy relation in $B^n_{n+1}$. An explicit example can be found  in \cite[Sec. 3.4]{Gu}, showing that two conjugate braids may not equivalent under $\sim_U$.

\section{Forced braids}

Clearly, a homeomorphism $f: (D^2, Q)\to (D^2, Q)$ can be regarded as a stratified one with respect to the stratification $\{\dmq, Q\}$. We shall use the concepts in fixed point theory for stratified maps (see \cite{JZZ}) to identify the braids forced by the given braid $\beta$.

\begin{defn}(see \cite[Definition 2.2]{JZ})
A fixed point class of $f^{m}$ is called degenerate if its coordinate is related to some lower stratum. Otherwise, it is called non-degenerate.
\end{defn}

\begin{thm}\label{forcedbraid}
The set of braids which are $(m, U)$-forced by $\beta$ is
$$
\{ \iota_{1}(\beta^m)\phi(\gamma) \mid \gamma
\mbox{ is the coordinate of essential non-degenerated class of $f^m$ on }
\dmq
\}.
$$
\end{thm}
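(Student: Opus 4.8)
The plan is to translate $(m,U)$-forcing completely into stratified Nielsen fixed point theory and then read off the answer from the persistence of essential classes. Theorem~\ref{pitobraid} already fixes the shape of every admissible braid: any fixed point $y$ of $f^m$ on $\dmq$ satisfies $\Theta_m(y)=\iota_{1}(\beta^m)\phi(\gamma_y)$, where $\gamma_y\in\pi_1(\dmq,q_0)$ is its coordinate. Since $\phi$ is injective and $\iota_{1}(\beta^m)$ is a fixed left factor, the assignment $\gamma\mapsto\iota_{1}(\beta^m)\phi(\gamma)$ descends to a bijection between $\fp^m$-conjugacy classes of coordinates and the relevant classes modulo $\sim_U$; this is exactly the content of the commutative diagram in the preceding Proposition, together with the remark that $U_{n+1}$ is normal in $B^n_{n+1}$. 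Hence a braid is $(m,U)$-forced by $\beta$ precisely when its associated coordinate $\gamma$ is realized by a fixed point of $f^m$ on $\dmq$ for \emph{every} isotopy $\{h_t\}$ from the identity to a homeomorphism $f$ determining $\beta$. Moreover, since $\beta$ determines the isotopy class of $f$ rel $Q$, it determines $\fp$ up to the conjugacy already built into the notion of coordinate, and therefore the Nielsen classes of $f^m|_{\dmq}$, their fixed point indices, and their degenerate/non-degenerate type depend only on $\beta$ and $m$. This makes ``the coordinate of an essential non-degenerate class of $f^m$'' an attribute of $\beta$ alone, and reduces the theorem to identifying the unavoidable coordinate classes.

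For the inclusion that every forced braid has the stated form, I would argue by contraposition using the Wecken (minimal-realization) property of surfaces with boundary. If a coordinate class is \emph{inessential}, the realization of the Nielsen number on the punctured disk provides an isotopy realizing the same $\beta$ for which $f^m$ has no fixed point in that class; if the class is \emph{degenerate}, the stratified realization theorem of \cite{JZZ} pushes its fixed points off the top stratum $\dmq$ into the lower stratum $Q$. In either case the candidate braid fails to be realized by some admissible isotopy, hence is not $(m,U)$-forced. Thus only the coordinates of essential non-degenerate classes can give forced braids.

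For the reverse inclusion I would invoke the homotopy invariance of the fixed point index together with stratified Nielsen theory. An essential class has nonzero index, and both the index and the partition into Nielsen classes are preserved under the homotopy induced by any isotopy realizing $\beta$; therefore every such $f^m$ must carry a fixed point in that class. Non-degeneracy then guarantees that this fixed point can be taken on $\dmq$ rather than in $Q$. Passing back through Theorem~\ref{pitobraid} and the bijection of the first paragraph shows that $\iota_{1}(\beta^m)\phi(\gamma)$ is realized for every admissible isotopy, i.e. it is $(m,U)$-forced by $\beta$.

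The main obstacle I anticipate is exactly the realization step underlying necessity: one must know that inessential classes can genuinely be \emph{removed}, and degenerate ones pushed into $Q$, by isotopies that still determine the prescribed braid $\beta$. This is not automatic; it requires the Wecken property for the punctured disk and the stratified minimal realization of \cite{JZZ}, applied in a manner compatible with fixing $\beta$ — equivalently, fixing the boundary behavior and the braiding combinatorics of how $Q$ is permuted. Verifying that the isotopies produced by these realization theorems stay within the class determining $\beta$ is the delicate point; everything else is bookkeeping through the coordinate dictionary already established in Section~3.
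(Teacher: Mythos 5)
Your overall skeleton does match the paper's proof: sufficiency via persistence of essential non-degenerate classes under (stratified) homotopy, and necessity by isotoping $f$ so that all other classes disappear. The sufficiency half is fine --- your two-step argument ``nonzero index persists, then non-degeneracy places the fixed point in $\dmq$'' is precisely the stratified lower bound theorem \cite[Theorem 4.4]{JZZ} that the paper invokes. The genuine gap is in the necessity half. The ``Wecken (minimal-realization) property of the punctured disk'' that you appeal to is \emph{false} as a statement about maps: $\chi(\dmq)=1-n<0$ for $n\ge 2$, and surfaces of negative Euler characteristic are exactly where the Wecken property fails (Jiang's classical counterexamples live on punctured disks). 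What is true, and what the paper actually uses, is the theorem of Jiang and Guo \cite{JG} for \emph{homeomorphisms up to isotopy}: one can isotope $f$ to a homeomorphism $f'$ such that $f'^m$ has no inessential and no degenerate fixed point class. Your other cited tool, a ``stratified minimal realization theorem of \cite{JZZ}'', is misattributed: \cite{JZZ} supplies the lower bound used in the sufficiency direction, not a removal theorem. So as written, the crux of the necessity direction rests on a false general principle plus a reference that does not contain the needed result; the missing idea is exactly \cite{JG}.

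The ``delicate point'' you flag at the end --- that the removing isotopy must remain compatible with $\beta$ --- actually dissolves by an elementary observation you could have closed yourself. Any isotopy $g_t$ through homeomorphisms of $(D^2,Q)$ moves each $q_i$ along a continuous path inside the finite, hence discrete, set $Q$; therefore $g_t(q_i)$ is constant in $t$, so $\myev_Q\circ g_t$ is a constant loop in $C_n(D^2)$. Concatenating such an isotopy onto the original one changes neither the endpoints nor the determined braid, so applying the Jiang--Guo isotopy (which preserves $Q$) automatically keeps the braid equal to $\beta$. With \cite{JG} in place of the Wecken appeal and this remark in place of your open verification, your argument becomes the paper's proof.
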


\begin{proof}
By the lower bound theorem \cite[Theorem 4.4]{JZZ} for stratified map, each essential non-degenerated class must contain a fixed point of $f^m$ on $\dmq$. Note that the isotopy class of $f|_{\dmq}$ is totally determined by $\beta$, and that coordinates of essential non-degenerated class are invariant under any (stratified)-homotopy. We know that each essential non-degenerated class of $f$ on $\dmq$ gives a braid $(m, U)$-forced by $\beta$.

Conversely, by the main result in \cite{JG}, one can isotope $f$ into a homeomorphism $f'$ such that $f'^m$ has no inessential or degenerated fixed point class. This means that $\iota_{1}(\beta^m)\phi(\gamma)$ is not $(m, U)$-forced braid if $\gamma$ is the coordinate of an inessential or degenerated fixed point class.
\end{proof}

Now we give a characterization of degenerated fixed point class. Corresponding braids from degenerated classes are said to be a ``peripheral braid'' in \cite{JZ}.

\begin{lem}
A fixed point class of $f^m$ is degenerated if and only if it has coordinate $\lambda$ such that $\fp^m(\nu_i) = \lambda\nu_i\lambda^{-1}$ for some $i$ with $1\le i\le n$.
\end{lem}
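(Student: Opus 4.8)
The plan is to show that, for a class of $f^m$ on $\dmq$, being degenerate is the same as being the peripheral class attached to a puncture $q_i$ that is genuinely fixed by $f^m$, and then to identify that geometric condition with the asserted algebraic relation. Recall that $\pi_1(\dmq,q_0)$ is the free group $\langle\nu_1,\dots,\nu_n\rangle$, that the conjugacy classes $\langle\nu_1\rangle,\dots,\langle\nu_n\rangle$ are pairwise distinct and detect the individual punctures, and that the coordinate of a class is a well-defined $\fp^m$-conjugacy class in this group. I will use two structural facts. First, since $f$ is orientation-preserving, $\fp^m$ carries the peripheral class $\langle\nu_i\rangle$ to the peripheral class of the puncture $f^m(q_i)$, with the same (positive) orientation; hence, by the pairwise non-conjugacy of the $\nu_j$, one has $\fp^m(\nu_i)$ conjugate to $\nu_i$ if and only if $f^m(q_i)=q_i$. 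Second, passing to the universal cover $p:\widetilde{\dmq}\to\dmq$, the class with coordinate $\lambda$ is the projection of $\mathrm{Fix}(\widetilde{f^m}_\lambda)$ for the lift $\widetilde{f^m}_\lambda$ determined by $\lambda$, and this lift realizes the conjugation of the deck transformation $\nu_i$ by the element $\lambda\fp^m(\nu_i)\lambda^{-1}$.

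With these in hand, the criterion unwinds as follows. The end of $\dmq$ at a puncture $q_i$ is an annulus whose $\pi_1$ is generated by $\nu_i$; a lift of this end is a half-open neighbourhood stabilized by the cyclic deck group $\langle\nu_i\rangle$. The lift $\widetilde{f^m}_\lambda$ sends this lifted end to a lifted end, and it preserves \emph{this particular} one precisely when it normalizes $\langle\nu_i\rangle$, i.e.\ when $\lambda\fp^m(\nu_i)\lambda^{-1}=\nu_i^{\pm1}$; the orientation-preservation recorded above forces the exponent $+1$, so the condition becomes $\fp^m(\nu_i)=\lambda\nu_i\lambda^{-1}$. When this holds, $\widetilde{f^m}_\lambda$ fixes a lift of the end at $q_i$, so the puncture $q_i$ lies in the closure of the fixed set of this lift; the associated class of $f^m$ on $\dmq$ is then the peripheral class of the lower-stratum fixed point $q_i$ and is therefore degenerate in the sense of \cite[Definition~2.2]{JZ}. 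Conversely, if a class is degenerate, its coordinate is, by definition, related to a fixed point of $f^m$ lying in the lower stratum $Q$, i.e.\ to some $q_i$ with $f^m(q_i)=q_i$, and reading the same equivalence backwards gives a coordinate $\lambda$ of the class with $\fp^m(\nu_i)=\lambda\nu_i\lambda^{-1}$. Finally I would note that replacing $\lambda$ by an $\fp^m$-conjugate $\fp^m(\alpha)\lambda\alpha^{-1}$ replaces $\nu_i$ by $\alpha\nu_i\alpha^{-1}$ on both sides, so the relation is independent of the chosen representative of the coordinate, as it must be.

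The main obstacle is the covering-space bookkeeping of the second structural fact together with the correct reading of ``coordinate related to the lower stratum''. One must verify, first, that an orientation-preserving homeomorphism fixing the puncture $q_i$ sends a small positively oriented loop about $q_i$ to a positively oriented (not reversed) loop about $q_i$, so that the exponent in $\lambda\fp^m(\nu_i)\lambda^{-1}=\nu_i^{\pm1}$ is $+1$ and distinct punctures cannot be confused; and second, that ``the lift $\widetilde{f^m}_\lambda$ fixes a lift of the end at $q_i$'' is exactly the stratified notion of a class being related to the lower-stratum fixed point $q_i$. The delicate point throughout is that the peripheral class need not contain an honest fixed point of $f^m$ in $\dmq$: it is a class ``at infinity'', so the argument has to be run at the level of lifts and their invariant ends rather than by exhibiting a path $\eta$ to a fixed point and computing $\langle\omega_{f^m}(f^m\eta)\eta^{-1}\rangle$ directly, the latter running into a base-point mismatch because $f^m$ need not fix the chosen approach point near $q_i$.
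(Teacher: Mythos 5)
Your covering-space strategy (lifts, ends, stabilizers) is genuinely different from the paper's argument and could in principle be carried out, but as written it has a gap at its very center: you never prove the equivalence between ``degenerate'' in the sense actually used --- \cite[Definition 2.2]{JZ}, operationalized by \cite[Prop. 3.2]{JZZ} as the existence of a path $\eta$ from a point $y$ of the class to some $q_i$ with $\eta \simeq f^m\eta$ as maps $I,0,[0,1),1 \to D^2, y, \dmq, Q$ --- and your geometric condition ``$\widetilde{f^m}_\lambda$ preserves a lifted end at $q_i$.'' You yourself flag this verification as something that ``must be verified,'' but you do not do it, and it is the entire content of the lemma; both directions of your argument reduce to this unproven bridge. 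The paper's proof is devoted precisely to building that bridge, by elementary means: a small isotopy makes a closed disk $B(q_i,\varepsilon)$ pointwise fixed (legitimate because fixed point coordinates are homotopy invariants) --- which, incidentally, dissolves the ``base-point mismatch'' you cite as the reason the direct computation cannot be run --- and then the coordinate of a degenerate class is computed explicitly as $\omega_f(f\omega'_i)\omega'^{-1}_i$, shown to conjugate $\nu_i$ to $\fp(\nu_i)$ using $f\delta_i=\delta_i$, with the free-group fact that conjugators of the nontrivial element $\nu_i$ are pinned down; the converse compares $y$ with the explicit degenerate fixed point $q'_i\in\partial B(q_i,\varepsilon)$.

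Beyond that missing idea, two concrete steps are faulty. First, the inference ``$\widetilde{f^m}_\lambda$ fixes a lift of the end at $q_i$, so the puncture $q_i$ lies in the closure of the fixed set of this lift'' is false: if $f^m$ is, near a fixed puncture $q_i$, a nontrivial rotation about $q_i$, then the relevant lift preserves the lifted end while having empty fixed set there. Degeneracy in the sense of \cite{JZ} is a condition on the coordinate (a degenerate class may even be empty), so nonemptiness of the lift's fixed set is neither available nor needed --- but your argument, as written, routes through it to conclude that the class ``is the peripheral class of $q_i$.'' Second, there is a convention mismatch you never resolve: your normalization yields $\lambda\fp^m(\nu_i)\lambda^{-1}=\nu_i$, i.e. $\fp^m(\nu_i)=\lambda^{-1}\nu_i\lambda$, whereas the lemma asserts $\fp^m(\nu_i)=\lambda\nu_i\lambda^{-1}$; whether these agree depends on the correspondence you choose between coordinates and lifts, which is left unfixed, so even granting the geometric claims the algebra does not yet match the statement being proved.
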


\begin{proof}
We shall give a proof for $m=1$ because the proof for general $m$ is the same.

Apply \cite[Prop. 3.2]{JZZ} into our stratified map $f$, a fixed point $y$ of $f$ lies in a degenerated class if and only if there is a path $\eta$ from $y$ to some $q_i$ such that $\eta \simeq f\eta: I, 0, [0,1), 1\to D^2, y, \dmq, Q$. Thus, the conclusion is obvious if $f$ has no fixed point on $Q$. In this situation, any fixed point class of $f$ on $\dmq$ is non-degenerated.

Now we consider the case that $f$ has some fixed points on $Q$. Since fixed point coordinates are homotopy invariants, by a small isotopy, we may assume that there is a small closed neighborhood $B(q_i, \varepsilon)$ of $q_i$ such that $B(q_i, \varepsilon)\subset Fix(f)$ for all $i$ with $f(q_i) = q_i$.

Let $y$ be a fixed point of $f$ lying in a degenerated class. By \cite[Prop. 3.2]{JZZ} again, there is a path $\eta$ from $y$ to some $q_i$ such that $\eta \simeq f\eta: I, 0, [0,1), 1\to D^2, y, \dmq, Q$. Thus, $\omega_i\eta^{-1}$ is a path from base point $q_0$ to $y$. Then the fixed point coordinate of $y$ is $\omega_f(f\omega_i)(f\eta^{-1})\eta\omega_i^{-1}\dot{\simeq}\omega_f(f\omega_i)\omega_i^{-1}$, because $\eta \dot{\simeq} f\eta$.

More precisely, above paths are not lie in $\dmq$ and meet $q_i$ somewhere. But, we can make $\eta$ so that its terminal part coincide with $\omega_i$, i.e. $\eta(t)= \omega_i(t)\in B(q_i,\varepsilon)$ for $t$ near the value $1$.
Let $q'_i$ be the unique point in $\omega_i\cap \partial B(q_i, \varepsilon)$, and $\omega'_i$ be the sub-path of $\omega_i$ with terminal point $q'_i$. Then the coordinate of $y$ is actually $\omega_f (f\omega'_i)\omega'^{-1}_i$.

Let $\delta_i$ be the small anti-clockwise loop lying in $\partial B(q_i, \varepsilon)$ with base point $q'_i$. Then $\omega'_i\delta_i\omega'^{-1}_i = \nu_i$ in $\pi_1(\dmq, q_0)$, where $\{\nu_1, \ldots, \nu_n\}$ is the chosen generator set of $\pi_1(\dmq, q_0)$. Since $\fp$ is an automorphism induced by a homeomorphism $f$ preserving the orientation and since $f(q_i)=q_i$, we have that $\fp(\nu_i) = \lambda\nu_i\lambda^{-1}$ for some $\lambda$ in $\pi_1(\dmq, q_0)$. Note that $\fp(\omega'_i\delta_i\omega'^{-1}_i) =\omega_f(f\omega'_i)(f\delta_i)(f\omega'^{-1}_i)\omega_f^{-1} = \omega_f(f\omega'_i)\delta_i(f\omega'^{-1}_i)\omega_f^{-1}$. We obtain that
$$
\lambda\omega'_i\delta_i\omega'^{-1}_i\lambda^{-1}
=\omega_f(f\omega'_i)\delta_i(f\omega'^{-1}_i)\omega_f^{-1}\in \pi_1(\dmq, q_0).$$
Since $\pi_1(\dmq, q_0)$ is a free group and $\omega'_i\delta_i\omega'^{-1}_i$ is a non-trivial element, we obtain that $\omega_f (f\omega'_i)\omega'^{-1}_i = \lambda$, i.e. $y$ has fixed point coordinate $\lambda$.

Conversely, suppose that a fixed point $y$ of $f$ has coordinate $\lambda$, where $\fp(\nu_i) = \lambda\nu_i\lambda^{-1}$ for some $i$ with $1\le i\le n$. Clearly, $q_i$ is a fixed point of $f$. We may also assume that there is a small closed neighborhood $B(q_i, \varepsilon)$ of $q_i$ such that $B(q_i, \varepsilon)\subset Fix(f)$. Above argument shows that the fixed point $q'_i$ in $B(q_i, \varepsilon)-{q_i}$ has fixed point coordinate $\lambda$. Thus, $y$ and $q'_i$ are in the same fixed point class of $f$ on $\dmq$. Since $q'_i$ is degenerated, the point $y$ lies in degenerated class.
\end{proof}

The information of essential fixed point class is contained in the Reidemeister trace $R(\fp)$, which can be computed by using Fox calculus $[1]-[Tr(J(\fp))]$, see \cite{Fa}. In general, the obtained form is not compact, i.e. there may be two items which are $\fp$-equivalent but are not the same element.  We encounter the problem when we decide which are degenerated. Such problem is called twist conjugation problem, which is a long-standing problem in geometric group theory and is finally solved in \cite{Bridson} and \cite{BM2016} independently.

In some situation, people are also interested in forcing relations when homeomorphisms on $D^2$ point-wisely fixed on the boundary $\partial D^2$. This is almost the same procedure as long as we rule out the braids $\iota_{1}(\beta^m)$, because we have

\begin{prop}
Any fixed point of $f^m$ on $\partial D^2$ has coordinate $1$ in $\pi_1(\dmq,q_0)$.
\end{prop}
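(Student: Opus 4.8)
The plan is to exploit the fact that, in the regime under consideration, $f$ and the chosen isotopy fix $\partial D^2$ pointwise, and to route the auxiliary path for $y$ entirely along the boundary circle, where the map acts trivially. First I would record two elementary observations. Since $Q$ lies in the interior of $D^2$, the boundary satisfies $\partial D^2\subset\dmq$ and is path-connected; hence for $y\in\partial D^2$ one may choose the connecting path $\eta$ from $q_0$ to $y$ to lie entirely in $\partial D^2$. Secondly, because $f|_{\partial D^2}=\mathrm{id}$ and the relevant isotopy $\{h_t\}$ is taken rel $\partial D^2$, each stage $f^j\circ h_t$ restricts to the identity on $\partial D^2$; consequently the $m$-fold isotopy $h_t\ast(f\circ h_t)\ast\cdots\ast(f^{m-1}\circ h_t)$ of Lemma~\ref{betam} keeps $q_0$ fixed, so the path $\omega_{f^m}$ that it traces out is the constant path $c_{q_0}$.

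With these in hand, the computation is immediate. Any $y\in\partial D^2$ is automatically fixed by $f^m$, so its coordinate in $\pi_1(\dmq,q_0)$ is represented by $\langle\omega_{f^m}\ast(f^m\eta)\ast\eta^{-1}\rangle$. Choosing $\eta\subset\partial D^2$ as above and using $f^m|_{\partial D^2}=\mathrm{id}$, we get $f^m\circ\eta=\eta$ as a path; combined with $\omega_{f^m}=c_{q_0}$ this yields $\langle c_{q_0}\ast\eta\ast\eta^{-1}\rangle=1$. Hence the coordinate of $y$ is the trivial element, which under Theorem~\ref{pitobraid} corresponds to the braid $\iota_{1}(\beta^m)\phi(1)=\iota_{1}(\beta^m)$, precisely the class that the surrounding discussion proposes to rule out.

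The only real subtlety — and the step I would be most careful to justify — is the reduction $\omega_{f^m}=c_{q_0}$, which genuinely requires the isotopy to be taken rel $\partial D^2$ rather than merely $f|_{\partial D^2}=\mathrm{id}$. If instead one allowed the base point to travel around the boundary (for instance by a rigid rotation isotopy returning to the identity), then $\omega_f$ would represent the boundary loop $\nu_1\cdots\nu_n$ and the coordinate of a boundary fixed point would be this nontrivial element rather than $1$. I would therefore make explicit at the outset that, in the boundary-fixing setting, the admissible isotopies are rel $\partial D^2$, exactly as in the mapping-class-group description of $B_n$, so that $q_0$ does not wind. Once this is fixed, the defining loop lies in $\partial D^2\cong S^1$ with winding number zero, and since the inclusion-induced map $\pi_1(\partial D^2)\to\pi_1(\dmq)$ sends the generator to the nontrivial element $\nu_1\cdots\nu_n$ and is therefore injective, a null-winding boundary loop is trivial in $\pi_1(\dmq,q_0)$.
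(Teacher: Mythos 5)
Your proof is correct and takes essentially the same route as the paper's: the paper likewise connects a boundary fixed point to $q_0$ along $\partial D^2$ (phrased as ``any fixed point of $f^m$ on $\partial D^2$ has the same coordinate as $q_0$'') and then notes that the coordinate of $q_0$ is trivial. Your explicit requirement that the isotopy be taken rel $\partial D^2$ is a worthwhile sharpening: the paper's ``clearly has coordinate $1$'' tacitly uses precisely this convention, and, as your rotation example shows, the conclusion genuinely fails for isotopies that wind $q_0$ around the boundary, since then the coordinate of $q_0$ is the nontrivial boundary class $\nu_1\cdots\nu_n$ rather than $1$.
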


\begin{proof}
Since we consider only orientation-preserving homeomorphisms on $(D^2, Q)$, the restriction $f^m|_{\partial D^2}$ of $f$ on  $\partial D^2$ is homotopic to the identity on $\partial D^2$. Any fixed point of $f^m$ on $D^2$ has the same coordinate as $q_0$, which clearly has coordinate $1$ in $\pi_1(\dmq, q_0)$.
\end{proof}

\section{An example}

We shall give an example to illustrate how to determine $(m, U)$-forcing relation.

\begin{lem}\label{tonggou}
(see \cite[Theorem 1.10]{Bir})
There is a canonical isomorphism $\xi: B_n \to \mathrm{MCG}(D^2, \mathrm{rel}\, Q, \partial D^2)$ such that for each generator $\sigma_i$ of $B_n$, the isomorphism on
$\pi_{1}(\dmq, q_{0})$ induced by $\xi(\sigma_i)$ is given by
\begin{equation}\label{Bn-action}
 \xi_{(\sigma_{i})_\pi}(x_j) = \left\{
  \begin{array}{ll}
     x_{i}\mapsto x_{i}x_{i+1}x_{i}^{-1} & \mbox{ if } j=i, \\
     x_{i+1}\mapsto x_{i} & \mbox{ if } j = i+1, \\
     x_{j}\mapsto x_{j}  & \mbox{ if } j\neq i,\ i+1,
  \end{array}
  \right.
\end{equation}
where $x_i$, $i=1, \ldots, n$, is determined by the loop $\nu_i$, and $\mathrm{MCG}(D^2, \mathrm{rel}\, Q, \partial D^2)$ is the mapping class group consisting of homeomorphisms on $D^{2}$ which fix $Q$ set-wisely and $\partial D^{2}$ point-wisely.
\end{lem}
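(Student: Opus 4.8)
The plan is to realise $\xi$ through the same configuration-space picture that already underlies \eqref{configandbraid}, so that $\xi$ becomes an isomorphism \emph{by construction}, and then to compute the induced action on $\pi_1(\dmq, q_0)$ by tracking the generating loops $\nu_j$ through an explicit half-twist representing $\xi(\sigma_i)$.

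First I would set up the relevant fibration. Let $G = \mathrm{Home}(D^2, \mathrm{rel}\,\partial D^2)$ be the group of homeomorphisms fixing $\partial D^2$ pointwise; by Alexander's trick $G$ is contractible. The isotopy extension theorem shows that the evaluation $\myev_Q\colon G\to C_n(D^2)$, $g\mapsto [g(q_1),\ldots,g(q_n)]$, is a fibration whose fiber over the base point $[q_1,\ldots,q_n]$ is exactly the subgroup of those $g$ with $g(Q)=Q$ set-wise, namely $\mathrm{Home}(D^2,\mathrm{rel}\,Q,\partial D^2)$. Since $G$ is a topological group and the fiber a subgroup, the connecting map in the long exact homotopy sequence is a group homomorphism, and contractibility of $G$ forces it to be an isomorphism
$$\pi_1(C_n(D^2),[q_1,\ldots,q_n]) \xrightarrow{\ \cong\ } \pi_0\big(\mathrm{Home}(D^2,\mathrm{rel}\,Q,\partial D^2)\big)=\mathrm{MCG}(D^2,\mathrm{rel}\,Q,\partial D^2).$$
Composing with the identification \eqref{configandbraid} defines $\xi$ and makes it an isomorphism with no further work; in particular I do \emph{not} need to invoke Artin's faithfulness theorem separately.

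Next I would unwind $\xi$ on a generator. The standard generator $\sigma_i\in B_n$ is the loop in $C_n(D^2)$ interchanging $q_i$ and $q_{i+1}$ along a small arc while fixing the remaining points; lifting this loop to a path in $G$ starting at the identity, its endpoint is the half-twist $H_i$ supported in a disk $D_i\supset\{q_i,q_{i+1}\}$ meeting no other puncture and rotating $D_i$ by $\pi$. Thus $\xi(\sigma_i)=[H_i]$. Because $H_i$ is the identity off $D_i$, each $\nu_j$ with $j\neq i,i+1$ may be isotoped away from $D_i$, giving $x_j\mapsto x_j$; inside $D_i$ the rotation carries $\nu_{i+1}$ onto $\nu_i$, yielding $x_{i+1}\mapsto x_i$, and carries $\nu_i$ onto a loop representing the conjugate $x_i x_{i+1} x_i^{-1}$ of $x_{i+1}$ by $x_i$. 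Reading these images off the picture gives exactly \eqref{Bn-action}.

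The only real difficulty is bookkeeping of orientation conventions, and it is here that I would be most careful. The substantive statement---that $\xi$ is an isomorphism---is automatic from the fibration, so the remaining work lies entirely in matching signs: the direction of the half-twist $H_i$, the orientation of the loops $x_j=[\nu_j]$ together with the cyclic order in which the arcs $\omega_j$ leave $q_0$, and the chosen sign of $\sigma_i$ must all be coordinated so that the images come out as $x_i x_{i+1} x_i^{-1}$ and $x_i$ rather than as their inverses or a relabelling. I would pin this down with the standard normalized picture---punctures on a horizontal diameter, $q_0$ at the top, the arcs $\omega_j$ descending, and a counterclockwise half-twist---after which each image loop is read directly from the isotopy and the formula \eqref{Bn-action} follows, recovering the cited result of Birman.
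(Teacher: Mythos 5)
The paper gives no proof of this lemma at all --- it is quoted directly from Birman's Theorem 1.10 --- so your proposal can only be compared with the classical argument behind that citation, and it is essentially that argument. Your sketch is correct: the evaluation fibration on the contractible group $\mathrm{Home}(D^2,\mathrm{rel}\,\partial D^2)$ (Alexander's trick) identifies $B_n=\pi_1(C_n(D^2),[q_1,\ldots,q_n])$ with $\pi_0$ of the fiber $\mathrm{Home}(D^2,\mathrm{rel}\,Q,\partial D^2)$, i.e.\ with $\mathrm{MCG}(D^2,\mathrm{rel}\,Q,\partial D^2)$, the action \eqref{Bn-action} is then read off from the half-twist lift of $\sigma_i$, and the orientation conventions you defer to the end are exactly the remaining bookkeeping that Birman's proof carries out.
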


\begin{ex}
$\beta=\sigma_{1}\sigma_{2}\sigma_{3}^{-1}\sigma_{4}^{-1}$.
\end{ex}

By Lemma~\ref{tonggou}, the automorphism $\fp$ is given by
$$   x_{1} \mapsto x_{1}x_{2}x_{5}x_{2}^{-1}x_{1}^{-1},\
     x_{2} \mapsto x_{1},\
     x_{3} \mapsto x_{2},\
     x_{4} \mapsto x_{5}^{-1}x_{3}x_{5},\
     x_{5} \mapsto x_{5}^{-1}x_{4}x_{5}.
$$
Then we obtain
$$
\begin{array}{lll}
  \frac{\partial \fp(x_{1})}{\partial x_{1}}=1-x_{1}x_{2}x_{5}x_{2}^{-1}x_{1}^{-1},
& \frac{\partial \fp(x_{2})}{\partial x_{2}}=0,
& \frac{\partial \fp(x_{3})}{\partial x_{3}}=0, \\
  \frac{\partial \fp(x_{4})}{\partial x_{4}}=0,
& \frac{\partial \fp(x_{5})}{\partial x_{5}}=-x_{5}^{-1}+x_{5}^{-1}x_{4}.
\end{array}
$$
The Reidemeister trace %
\begin{eqnarray*}
R(f_{\pi})
 & = & [1]-[Tr(J(\fp))]\\
 & = & [1]-\sum_{i=1}^5[(\frac{\partial \fp(x_{i})}{\partial x_{i}})]\\
 & = & [x_{1}x_{2}x_{5}x_{2}^{-1}x_{1}^{-1}]+[x_{5}^{-1}]-[x_{5}^{-1}x_{4}]\\
 & = & [x_{1}]+[x_{5}^{-1}]-[1].
\end{eqnarray*}

By using abelianization, we have that $R^{Ab}(f_{\pi})=[\bar x_1]+[\bar x_1^{-1}]-[\bar 1]$. It follows that three classes are distinct even in abelianization because abelian Nielsen equivalent relation is generated by $\bar x_i\sim \bar x_j$ for $i,j=1, \ldots, 5$. Here, $\bar x_i$ is the element in $H_1(\dmq)$ determined by $x_i$.  Since no $\bar x_i$ is fixed by $f_*: H_1(\dmq)\to H_1(\dmq)$, $f$ has no fixed point on $Q$. Thus, all fixed point class of $f$ are non-degenerated. By Theorem~\ref{forcedbraid}, each of these three classes gives a braid which is $(1,U)$-forced by $\beta=\sigma_{1}\sigma_{2}\sigma_{3}^{-1}\sigma_{4}^{-1}$.
By Theorem~\ref{pitobraid}, the braids corresponding to~$[x_{1}]$ is~$\iota_{1}(\beta) A_{16}$,
the braid corresponding to ~$[x_{5}^{-1}]$ is~$\iota_{1}(\beta) A_{56}^{-1}$,
and the braid corresponding to ~$[1]$ is ~$\iota_{1}(\beta)$.

\end{document}